\nonstopmode \numberwithin{equation}{section}
\newtheorem{thm}{Theorem}[section]
\newtheorem{cor}{Corollary}[section]
\newtheorem{lem}{Lemma}[section]
\theoremstyle{definition}
\newcounter{minutes}\setcounter{minutes}{\time}
\newcounter{hours}\setcounter{hours}{\time}
\newcounter {own}
\def\theown {\thesection       .\arabic{own}}
\newcounter{alphabet}
\begin{document}

\title{PRE-SCHWARZIAN NORM ESTIMATES FOR THE CLASS OF JANOWSKI STARLIKE FUNCTIONS}

\author{Md Firoz Ali}
\address{Md Firoz Ali,
Department of Mathematics, National Institute of Technology Durgapur,
Durgapur- 713209, West Bengal, India.}
\email{ali.firoz89@gmail.com, firoz.ali@maths.nitdgp.ac.in}

\author{Sanjit Pal}
\address{Sanjit Pal,
Department of Mathematics, National Institute of Technology Durgapur,
Durgapur- 713209, West Bengal, India.}
\email{palsanjit6@gmail.com}

\subjclass[2010]{Primary 30C45, 30C55}
\keywords{Univalent functions, Subordination, Janowski starlike functions, pre-Schwarzian norm}

\def\thefootnote{}
\footnotetext{ {\tiny File:~\jobname.tex,
printed: \number\year-\number\month-\number\day,
          \thehours.\ifnum\theminutes<10{0}\fi\theminutes }
} \makeatletter\def\thefootnote{\@arabic\c@footnote}\makeatother

\begin{abstract}
For $-1\leq B<A\leq 1$, let $\mathcal{S}\mbox{*}(A,B)$ denote the class of Janowski starlike functions which satisfy the subordination relation $zf'(z)/f(z)\prec (1+Az)/(1+Bz)$. In the present article, we determine the sharp of pre-Schwarzian norm for the functions in the class $\mathcal{S}\mbox{*}(A,B)$.
\end{abstract}

\thanks{}

\maketitle
\pagestyle{myheadings}
\markboth{Md Firoz Ali and Sanjit Pal}{The pre-Schwarzian norm estimates for the class of Janowski starlike functions}

\section{Introduction}

Let $\mathcal{A}$ denote the class of analytic functions in the open unit disc $\mathbb{D}=\{z\in\mathbb{C}:|z|<1\}$ with the Taylor series expansion
\begin{equation}\label{p-00001}
f(z)= z+\sum_{n=2}^{\infty}a_n z^n.
\end{equation}
Let $\mathcal{S}$ denote the subclass of $\mathcal{A}$ consisting of functions which are univalent. A function $f\in\mathcal{A}$ is called starlike (respectively convex) if $f$ is univalent and the image $f(\mathbb{D})$ is starlike with respect to the origin (respectively convex). The classes of all starlike and convex functions are denoted by $\mathcal{S^*}$ and $\mathcal{C}$ respectively. It is well known that a function $f\in\mathcal{A}$ is starlike if and only if ${\rm Re\,} [zf'(z)/f(z)] > 0$ for $z\in\mathbb{D}$ and a function $f\in\mathcal{A}$ is convex if and only if ${\rm Re\,} [1 + zf''(z)/f'(z)] > 0$ for $z\in\mathbb{D}$. See \cite{Duren-1983,Goodman-book-1983} for further information on these classes.\\

Let $f$ and $g$ be two analytic functions in $\mathbb{D}$. The function $f$ is said to be subordinate to $g$ if there exists an analytic function $\omega:\mathbb{D}\rightarrow\mathbb{D}$ with $\omega(0)=0$ such that $f(z)=g(\omega(z))$ and it is denoted by $f\prec g$. Moreover, when $g$ is univalent, $f\prec g$ if and only if $f(0)=g(0)$ and $f(\mathbb{D})\subset g(\mathbb{D})$.\\

For $-1\leq B<A\leq 1$, let $\mathcal{S}\mbox{*}(A,B)$ denote the class of functions $f\in\mathcal{A}$ satisfying
\begin{equation}\label{p-00005}
\frac{zf'(z)}{f(z)}\prec \frac{1+Az}{1+Bz}\quad\text{for}~z\in\mathbb{D}.
\end{equation}
The class  $\mathcal{S}\mbox{*}(A,B)$ was first introduced and studied by Janowski \cite{Janowski-1973} in $1973$. It is easy to see that the functions in  $\mathcal{S}\mbox{*}(A,B)$ are also starlike functions. Several mathematicians studied the class  $\mathcal{S}\mbox{*}(A,B)$ with an emphasis on particular values of $A$ and $B$. For particular values of the parameters $A$ and $B$, one can obtain various subclasses studied by several authors. For example, we list down some of the subclasses as:
\begin{enumerate}[(i)]
\item $\mathcal{S}\mbox{*}(1,-1)=:\mathcal{S}\mbox{*}$ is the family of starlike functions.
\item For $0\leq\alpha<1$, $\mathcal{S}\mbox{*}(1-2\alpha,-1)=:\mathcal{S}\mbox{*}(\alpha)$ is the family of starlike functions of order $\alpha$.
\item The family $\mathcal{S}\mbox{*}(1,0)$ was introduced and studied by Singh and Singh \cite{Singh-Singh-1974}.
\item For $0<\alpha\leq 1$, the family $\mathcal{S}\mbox{*}(\alpha,-\alpha)=:\mathcal{S}(\alpha)$ was introduced and studied by Padmanabhan \cite{Padmanabhan-1968}.
\end{enumerate}

\noindent For $-1\le B<A\le 1$, define the function $K_{A,B}(z)$ by
\begin{equation}\label{p-00010}
K_{A,B}(z)=\begin{cases}
ze^{Az} & \text{ if }~B=0,\\
z(1+Bz)^{\frac{A}{B}-1} & \text{ if }~ B\neq 0.
\end{cases}
\end{equation}
It is easy to show that $K_{A,B}(z)$ belongs to the class $\mathcal{S}\mbox{*}(A,B)$. The function $K_{A,B}(z)$ plays the role of extremal function for many extremal problems in the class $\mathcal{S}\mbox{*}(A,B)$.\\

\noindent The pre-Schwarzian and Schwarzian derivatives for a locally univalent function $f$ are defined by
$$T_f(z)=\frac{f''(z)}{f'(z)} \quad \text{and} \quad S_f(z)=(T_f(z))^{'}-\frac{1}{2}(T_f(z))^2$$
respectively. Also, the pre-Schwarzian and Schwarzian norms of $f$ are defined by
$$||T_f||=\sup\limits_{z\in\mathbb{D}}(1-|z|^2)|T_f(z)| \quad \text{and}\quad ||S_f||=\sup\limits_{z\in\mathbb{D}}(1-|z|^2)^2|S_f(z)|$$
respectively. These norms have significant meanings in the theory of Teichm\"{u}ller space. For a univalent function $f$, it is well known that $||T_f||\leq 6$ and $||S_f||\leq 6$ (see \cite{Nehari-1949}) and these estimates are best possible. On the other hand, for a locally univalent function $f$ in $\mathcal{A}$, it is also known that if $||T_f||\leq 1$ (see \cite{Becker-1972}, \cite{Becker-Pommerenke-1984}) or $||S_f||\leq 2$ (see \cite{Nehari-1949}), then the function $f$ is univalent in $\mathbb{D}$. In 1976, Yamashita \cite{Yamashita-1976} proved that $||T_f ||$ is finite if and only if $f$ is uniformly locally univalent in $\mathbb{D}$.\\

Estimates of the pre-Schwarzian and Schwarzian norms for certain class of analytic and univalent functions has been studied by many researchers. More focus has been given to pre-Schwarzian norm compared to Schwarzian norm due to computational difficulty. We would like to mention some of the promising works in this direction. For $0<\alpha\le 1$, a function $f\in\mathcal{A}$ is called strongly starlike function of order $\alpha$ if $\left|\arg\{zf'(z)/f(z)\}\right|<\pi\alpha/2~ \text{for}~z\in\mathbb{D}$. In 1998, Sugawa \cite{Sugawa-1998} studied the pre-Schwarzian  norm for strongly starlike functions of order $\alpha$, $0<\alpha\le 1$, and proved that $||T_f||\leq M(\alpha)+2\alpha,$
where
$$M(\alpha) =\frac{4\alpha c(\alpha)}{(1-\alpha)c(\alpha)^2+1+\alpha}$$
and $c(\alpha)$ is the unique solution of the following equation in $(1,\infty)$:
$$(1-\alpha)x^{\alpha +2}+(1+\alpha)x^{\alpha}-x^2-1=0$$
and the estimate is sharp. In 1999, Yamashita \cite{Yamashita-1999} studied the pre-Schwarzian norm for the classes $\mathcal{S}\mbox{*}(\alpha)$ and $\mathcal{C}(\alpha)$, where $0\leq \alpha<1$ and proved the sharp estimates
$||T_f||\leq 6-4\alpha$ for $f\in\mathcal{S}\mbox{*}(\alpha)$ and $||T_f||\leq 4(1-\alpha)  ~~\text{for}~ f\in\mathcal{C}(\alpha)$
and the estimates are sharp. A function $f\in\mathcal{A}$ is said to be Janowski convex function if
$1+zf''(z)/f'(z)\prec (1+Az)/(1+Bz)$,
where $-1\leq B<A\leq 1.$ The set of all Janowski convex functions is denoted by $\mathcal{C}(A,B)$. In 2006, Kim and Sugawa \cite{Kim-Sugawa-2006} studied the pre-Schwarzian norm for the class $\mathcal{C}(A,B)$ and proved that
$||T_f||\leq 2(A-B)/(1+\sqrt{1-B^2})$
and the estimate is sharp. A function $f\in \mathcal{A}$ is said to be $\alpha$-spirallike function if ${\rm Re\,}\{e^{-i\alpha}zf'(z)/f(z)\}>0$, where $\alpha\in(-\pi/2,\pi/2)$. In 2000, Okuyama \cite{Okuyama-2000} obtained the sharp estimate of the pre-Schwarzian norm for $\alpha$-spirallike functions. In 2014, Aghalary and Orouji \cite{Aghalary-Orouji-2014} obtained the sharp estimate of the pre-Schwarzian norm for $\alpha$-spirallike functions of order $\rho $, where $-\frac{\pi}{2}<\alpha<\frac{\pi}{2}$ and $0\leq \rho<1$.\\

In the present article, we consider the class $\mathcal{S}\mbox{*}(A,B)$ and find the sharp estimate of the pre-Schwarzian norm for functions in this class. The following result regarding Schwarz functions will be required to prove our main result.

\begin{lem}\label{p-lemma0001}\cite[Lemma 2.2]{Aghalary-Orouji-2014}
Suppose that $\omega:\mathbb{D}\rightarrow \mathbb{D}$ is an analytic function with $\omega(0)=0$. If $-1<A\le 1$, then
$$M(z):=\frac{(|z|^2-|\omega(z)|^2)+|\omega(z)|(1-|z|^2)|2+A\omega(z)|}{|z|(1-|\omega(z)|^2)|2+A\omega(z)|}\leq 1, \quad z\in \mathbb{D}.$$
\end{lem}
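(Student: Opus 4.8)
The plan is to reduce $M(z)\le 1$ to an elementary inequality in the two real quantities $r:=|z|$ and $\rho:=|\omega(z)|$, and then to settle that inequality by a short computation; the argument needs nothing beyond the Schwarz lemma and the triangle inequality, so I do not anticipate a real obstacle. Two inputs are used. First, since $\omega:\mathbb{D}\to\mathbb{D}$ is analytic with $\omega(0)=0$, the Schwarz lemma gives $\rho\le r$. Second, writing $D:=|2+A\omega(z)|$, the triangle inequality together with $|A|\le 1$ and $\rho<1$ yields $D\ge 2-|A|\rho\ge 2-\rho>1$. In particular the denominator $r(1-\rho^{2})D$ of $M(z)$ is strictly positive for $z\ne 0$ (the value at $z=0$ being immaterial, or obtained by continuity), so $M(z)\le 1$ is equivalent to
\[
(r^{2}-\rho^{2})+\rho(1-r^{2})D\le r(1-\rho^{2})D .
\]

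I would then rearrange this. Collecting the two terms containing $D$ and using the identity $r(1-\rho^{2})-\rho(1-r^{2})=(r-\rho)(1+r\rho)$, the displayed inequality becomes $(r-\rho)(r+\rho)\le D\,(r-\rho)(1+r\rho)$. Since $r\ge\rho$, the case $r=\rho$ is trivial (both sides vanish, and indeed $M(z)=1$ there), while for $r>\rho$ we may cancel the positive factor $r-\rho$ and it remains to prove $r+\rho\le D(1+r\rho)$. Replacing $D$ by its lower bound $2-\rho$, it suffices to verify $r+\rho\le(2-\rho)(1+r\rho)$, and a one-line computation gives
\[
(2-\rho)(1+r\rho)-(r+\rho)=2(1-\rho)-r(1-\rho)^{2}=(1-\rho)\bigl(2-r(1-\rho)\bigr)>0,
\]
because $0\le\rho<1$ forces $1-\rho>0$ while $r<1$ and $1-\rho\le 1$ force $r(1-\rho)<1<2$. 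Hence $M(z)<1$ when $r>\rho$ and $M(z)=1$ when $r=\rho$, so $M(z)\le 1$ throughout $\mathbb{D}$, with equality precisely when $\omega$ is a rotation of $\mathbb{D}$.

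The closest thing to an obstacle is simply to be careful at three points: that the denominator of $M(z)$ is non-vanishing, so that clearing it preserves the direction of the inequality; that the Schwarz lemma is invoked to guarantee $r-\rho\ge 0$, so that cancelling $r-\rho$ is legitimate and the case split $r=\rho$ versus $r>\rho$ is exhaustive; and the harmless behaviour at $z=0$. Beyond these, the proof reduces to the algebraic identity displayed above.
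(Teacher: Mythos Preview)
Your proof is correct. Note, however, that the paper does not supply its own proof of this lemma: it is quoted verbatim from \cite{Aghalary-Orouji-2014} and used as a black box, so there is no argument in the present paper to compare against. Your derivation is self-contained: the Schwarz lemma gives $\rho\le r$, the triangle inequality gives $D:=|2+A\omega(z)|\ge 2-\rho>1$, and after clearing the (positive) denominator and cancelling the common factor $r-\rho\ge 0$ the claim reduces to $(2-\rho)(1+r\rho)-(r+\rho)=(1-\rho)\bigl(2-r(1-\rho)\bigr)>0$, which is immediate. The only cosmetic point is the indeterminate form at $z=0$; you note it can be handled by continuity, and indeed $M(z)\to|\omega'(0)|\le 1$ there.
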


\section{Main results}
Before we state our main result, we prove the following lemma which has an independent interest.

\begin{lem}\label{p-lemma0005}
For $-1\leq B<A\leq 1$, let
\begin{equation}
\phi_{A,B}(z)=\frac{2+Az}{(1+Az)(1+Bz)}.
\end{equation}
Then $\phi_{A,B}(z)$ satisfies the following sharp inequalities.

\begin{enumerate}[(i)]
\item If $AB\geq 0$ then for every $z\in \mathbb{D}$, we have
\begin{equation}\label{p-0010a}
|\phi_{A,B}(z)|\leq
\begin{cases}
\phi_{A,B}(-|z|) & \text{ for }~ 0\leq B<A\leq 1\\[2mm]
\phi_{A,B}(|z|) & \text{ for }~ -1\leq B<A\leq 0.
\end{cases}
\end{equation}

\item If $AB<0$ then for every $z\in \mathbb{D}$, we have
\begin{equation}\label{p-0010b}
|\phi_{A,B}(z)|\leq
\begin{cases}
\phi_{A,B}(|z|) & \text{ for }~ |z|^2\le \frac{A+2B}{A^2B}\\[2mm]
\phi_{A,B}(-|z|) & \text{ for }~ |z|^2\ge \frac{A+2B}{A^2B}.
\end{cases}
\end{equation}
\end{enumerate}
\end{lem}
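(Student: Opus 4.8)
The plan is to reduce everything to a one–variable calculus problem by writing $z = re^{i\theta}$ with $r = |z|$ fixed and studying $g(\theta) := |\phi_{A,B}(re^{i\theta})|^2$ as a function of $\theta \in [-\pi,\pi]$. Since
\[
|\phi_{A,B}(z)|^2 = \frac{|2+Az|^2}{|1+Az|^2\,|1+Bz|^2},
\]
and each factor $|2+Az|^2 = 4 + 4Ar\cos\theta + A^2r^2$, $|1+Az|^2 = 1 + 2Ar\cos\theta + A^2r^2$, $|1+Bz|^2 = 1 + 2Br\cos\theta + B^2r^2$ depends on $\theta$ only through $t := \cos\theta \in [-1,1]$, the problem becomes: maximize the rational function
\[
h(t) = \frac{4 + 4Art + A^2r^2}{(1 + 2Art + A^2r^2)(1 + 2Brt + B^2r^2)}
\]
over $t \in [-1,1]$. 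Because $\phi_{A,B}$ is real on the real axis, $\phi_{A,B}(r) = \sqrt{h(1)}$ and $\phi_{A,B}(-r) = \sqrt{h(-1)}$, so the claimed bounds are exactly the assertions that $h$ attains its maximum on $[-1,1]$ at one of the endpoints, with the endpoint determined by the sign conditions and, in the mixed-sign case, by the position of $r^2$ relative to $(A+2B)/(A^2B)$.

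The key step is therefore to analyze $h'(t)$. First I would clear denominators: writing $h = P/(QR)$ with $P,Q,R$ the three affine-in-$t$ polynomials above, the sign of $h'(t)$ equals the sign of $N(t) := P'(t)Q(t)R(t) - P(t)\big(Q'(t)R(t) + Q(t)R'(t)\big)$, which is a polynomial in $t$ of degree at most $2$ (the leading cubic terms cancel because $P'$ is constant). So $h$ has at most two critical points on $\mathbb{R}$, and one shows $h$ is monotone on $[-1,1]$ in the same-sign cases and unimodal with a \emph{minimum} (not a maximum) in the interior in the mixed-sign case — so that the maximum over $[-1,1]$ is still at an endpoint. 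Concretely, I expect $N(t)$ to factor or simplify nicely after substituting; a convenient trick is to evaluate $N$ at $t=1$ and $t=-1$ and check signs: if $N(-1)$ and $N(1)$ have appropriate signs one can conclude monotonicity, and the threshold $r^2 = (A+2B)/(A^2B)$ should emerge precisely as the value of $r$ at which $N(1)$ (or $N(-1)$) changes sign, i.e.\ the boundary between the two regimes in part (ii). I would compute $N(\pm 1)$ explicitly, factor out the positive quantities, and isolate the sign-determining factor; matching it to $A+2B - A^2Br^2$ is the crux.

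For the structural claims I would argue as follows. In case (i) with $0 \le B < A \le 1$: both $Q$ and $R$ are increasing in $t$ on $[-1,1]$ (their derivatives $2Ar, 2Br \ge 0$) while $P$ is also increasing, but the denominator grows fast enough that $h$ is decreasing — this I would verify by showing $N(t) \le 0$ on $[-1,1]$, so the max is at $t=-1$, giving $|\phi_{A,B}(z)| \le \sqrt{h(-1)} = \phi_{A,B}(-r)$ (note $\phi_{A,B}(-r) > 0$ since $1-Ar, 1-Br > 0$). The case $-1 \le B < A \le 0$ is symmetric under $t \mapsto -t$ together with $(A,B) \mapsto (-A,-B)$, which swaps the roles of $r$ and $-r$; alternatively $N(t) \ge 0$ directly. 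In case (ii), $AB < 0$ means exactly one of $Q,R$ is increasing and the other decreasing, so the denominator $QR$ is a downward-opening-ish product with an interior maximum, forcing $h$ to have an interior \emph{minimum}; hence the maximum of $h$ on $[-1,1]$ is again at an endpoint, and comparing $h(1)$ with $h(-1)$ directly — i.e.\ determining the sign of $h(1) - h(-1)$ — yields the dichotomy at $r^2 = (A+2B)/(A^2B)$. The main obstacle will be organizing the sign analysis of the quadratic $N(t)$ cleanly across the sub-cases and confirming that the mixed-sign interior critical point is genuinely a minimum rather than a maximum; once $N(\pm1)$ is computed in factored form, everything else is bookkeeping, and the sharpness is immediate because equality holds at the real points $z = \pm r$.
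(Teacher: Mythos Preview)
Your approach is correct and essentially the same as the paper's: both reduce to maximizing a one-variable rational function on a closed interval (you use $t=\cos\theta\in[-1,1]$, the paper uses $x=\operatorname{Re} z\in[-r,r]$, which is the same up to the substitution $x=rt$), show that any interior critical point is a minimum so the maximum occurs at an endpoint, and then compare the two endpoint values to obtain the threshold $r^2=(A+2B)/(A^2B)$ in case~(ii). The only methodological difference worth noting is that for case~(i) the paper bypasses the analysis of $N(t)$ by writing $\psi=\psi_1\psi_2$ with $\psi_1=P/Q$ and $\psi_2=1/R$ and observing that $\psi_1'$ has the sign of $A(A^2r^2-2)$ while $\psi_2'$ has the sign of $-B$, so both factors are monotone in the same direction when $AB\ge 0$; this is a small shortcut over your proposed direct sign analysis of $N(t)$, though either route works.
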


\begin{proof}
Let $z=x+iy\in\mathbb{D}$ and $|z|=r$, $0<r<1$. Then
\begin{align}\label{p-0015}
|\phi_{A,B}(z)|^2
&\le \max\limits_{|z|=r}|\phi_{A,B}(z)|^2 = \max\limits_{|z|=r}\left|\frac{2+Az}{(1+Az)(1+Bz)}\right|^2\\
&= \max\limits_{|z|=r}\frac{4+A^2|z|^2+4A{\rm Re\,}(z)}{(1+A^2|z|^2+2A{\rm Re\,}(z))(1+B^2|z|^2+2B{\rm Re\,}(z))}\nonumber\\
& =\max\limits_{-r\leq x \leq r}\psi(x)\nonumber\\
& \leq \left(\max\limits_{-r\leq x \leq r}\psi_1(x)\right) \left(\max\limits_{-r\leq x \leq r}\psi_2(x)\right)\nonumber,
\end{align}
where
\begin{equation} \label{p-0020}
\psi(x)=\frac{4+A^2r^2+4Ax}{(1+A^2r^2+2Ax)(1+B^2r^2+2Bx)},
\end{equation}
\begin{equation*}\label{p-0023}
\psi_1(x)=\frac{4+4Ax+A^2r^2}{1+2Ax+A^2r^2}\quad\text{and}\quad\psi_2(x)=\frac{1}{1+2Bx+B^2r^2}.
\end{equation*}
A simple calculation gives,
\begin{equation*}\label{p-0025}
\psi'(x)=\frac{h(x)}{(1+A^2r^2+2Ax)^2(1+B^2r^2+2Bx)^2},
\end{equation*}
where
\begin{align}\label{p-0030}
h(x)=&-16A^2Bx^2-8AB(4+A^2r^2)x\\
&-(4A+8B-2A^3r^2+10A^2Br^2+4AB^2r^2+2A^4Br^4-2A^3B^2r^4).\nonumber
\end{align}
Also,
\begin{equation}\label{p-0030a}
\psi_1'(x)=\frac{2A(-2+A^2r^2)}{(1+A^2r^2+2Ax)^2} \quad\text{and}\quad \psi_2'(x)=-\frac{2B}{(1+B^2r^2+2Bx)^2}.
\end{equation}
$(i)$ Let $0\leq B<A\leq 1$. From \eqref{p-0030a}, it is easy to show that $\psi_1$ and $\psi_2$ are decreasing functions on $[-r,r]$. Thus, $\psi_1$ and $\psi_2$ have maximum at $x=-r$. Consequently, the first inequality of \eqref{p-0010a} follows from \eqref{p-0015}.\\

On the other hand, if $-1\leq B<A\leq 0$, from \eqref{p-0030a}, it is easy to see that $\psi_1$ and $\psi_2$ are increasing functions on $[-r,r]$. Thus, $\psi_1$ and $\psi_2$ have maximum at $x=r$. Consequently, the second inequality of \eqref{p-0010a} follows from \eqref{p-0015}.
\\

$(ii)$ Let $AB<0$. Then from \eqref{p-0030}, we have
$$h'(x)=-32AB(1+Ax)-8A^3Br^2>0\quad\text{for all}\quad x\in[-r,r].$$
Thus, the function $h(x)$ is strictly increasing function on $[-r,r]$. If $h$ has a zero in $(-r,r)$, then it is unique, say $x_0$. At $x_0$, it is easy to see that
$$\psi''(x_0)=\frac{h'(x_0)}{(1+A^2r^2+2Ax_0)^2(1+B^2r^2+2Bx_0)^2}>0,$$
which shows that the function $\psi$ has a minimum at $x_0$. That is, the function $\psi$ has maximum at the endpoint $x=r,~-r$. From \eqref{p-0020}, it is not very difficult to verify that
\begin{align*}
\psi(-r)\ge \psi(r) &\iff \frac{2-Ar}{(1-Ar)(1-Br)}\ge \frac{2+Ar}{(1+Ar)(1+Br)}\\
&\iff A+2B-A^2Br^2\ge 0.
\end{align*}
Thus, the maximum of $\psi(x)$ attains at $x=-r$ if $A+2B-A^2Br^2\ge 0$ and at $x=r$ if $A+2B-A^2Br^2\le0$. That is,
\begin{align*}
\max\limits_{-r\leq x \leq r}\psi(x)=
\begin{cases}
\psi(r) &\text{for}~ r^2\le \frac{A+2B}{A^2B}\\[2mm]
\psi(-r) &\text{for}~ r^2\ge \frac{A+2B}{A^2B}.\\[2mm]
\end{cases}
\end{align*}
The desired inequality \eqref{p-0010b} now follows from \eqref{p-0015}.
\end{proof}

Before we state our main results, we introduce some notations which we will use throughout this section. Let
\begin{equation}\label{p-0040a}
\gamma_1(x)=\frac{(1-x^2)(2-Ax)}{(1-Ax)(1-Bx)},\quad
\gamma_2(x)=\frac{(1-x^2)(2+Ax)}{(1+Ax)(1+Bx)},
\end{equation}
\begin{equation}\label{p-0040b}
h_1(x)=A^2Bx^4-2A(A+B)x^3+[5A+(2+A^2)B]x^2-4(1+AB)x+(A+2B),
\end{equation}
\begin{equation}\label{p-0040c}
h_2(x)=A^2Bx^4+2A(A+B)x^3+[5A+(2+A^2)B]x^2+4(1+AB)x+(A+2B).
\end{equation}

\begin{thm}\label{theorem-001}
For $-1\leq B<A\leq 1$, let $f\in\mathcal{S}\mbox{*}(A,B)$ be of the form (\ref{p-00001}). Then we have the sharp inequality $||T_f||\leq ||T_{K_{A,B}}||$, where $K_{A,B}$ is given by \eqref{p-00010}.
\begin{enumerate}[(i)]
\item If $AB\ge 0$, then
$$||T_{K_{A,B}}||=\begin{cases}
(A-B)\gamma_1(\alpha_1), & \text{ for }~ 0\le B<A<1,\\[2mm]
(A-B)\gamma_1(\alpha_1), & \text{ for }~ A=1,~B<\frac{1}{3},\\[2mm]
2, & \text{ for }~ A=1,~B\ge \frac{1}{3},\\[2mm]
(A-B)\gamma_2(\alpha_2), & \text{ for }~ -1< B<A\le 0,\\[2mm]
2(2+A), & \text{for}~B=-1, -1<A\le 0,
\end{cases}$$
where $\gamma_1$ and $\gamma_2$ are given by \eqref{p-0040a} and $\alpha_1$, $\alpha_2$ are the unique roots in $(0,1)$ of the equations $h_1(x)=0$, $h_2(x)=0$ respectively and $h_1$, $h_2$ are given by \eqref{p-0040b}, \eqref{p-0040c} respectively.

\item
If $AB<0$ with $\beta=\sqrt{(A+2B)/A^2B}$, then
$$||T_{K_{A,B}}||=\begin{cases}
(A-B)\gamma_1(\alpha_1), & \text{ for }~ A+2B> 0,\\[2mm]
2(A-B), & \text{ for } ~A+2B= 0,\\[2mm]
(A-B)\max\{\gamma_2(\alpha_2),\gamma_1(\beta)\}, & \text{ for }~ A+2B<0,~\beta<1,\\[2mm]
(A-B)\gamma_2(\alpha_2), & \text{ for }~ A+2B<0,~\beta\ge 1,~B\ne -1,\\[2mm]
2(2+A), & \text{for}~A>0,~B=-1,
\end{cases}$$
where $\gamma_1$ and $\gamma_2$ are given by \eqref{p-0040a} and $\alpha_1$, $\alpha_2$ are the unique roots in $(0,1)$ of the equations $h_1(x)=0$, $h_2(x)=0$ respectively and $h_1$, $h_2$ are given by \eqref{p-0040b}, \eqref{p-0040c} respectively.
\end{enumerate}
\end{thm}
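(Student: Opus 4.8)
The plan is to split the statement into two parts: the universal estimate $\|T_f\|\le\|T_{K_{A,B}}\|$ (with sharpness), and the explicit evaluation of $\|T_{K_{A,B}}\|$ in each listed case.

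\emph{The universal bound.} Since $f\in\mathcal S^*(A,B)$, there is a Schwarz function $\omega$ (analytic on $\mathbb D$, $\omega(0)=0$, $|\omega|<1$) with $zf'(z)/f(z)=(1+A\omega(z))/(1+B\omega(z))$. Logarithmic differentiation of this identity, using $f'(z)/f(z)-1/z=\frac1z\big(\tfrac{zf'}{f}-1\big)=\tfrac{(A-B)\omega(z)}{z(1+B\omega(z))}$, gives after simplification
$$T_f(z)=\frac{f''(z)}{f'(z)}=(A-B)\,\frac{\omega(z)\,(1+A\omega(z))+z\,\omega'(z)}{z\,(1+A\omega(z))(1+B\omega(z))},$$
and applying this to $K_{A,B}$ (where $\omega(z)=z$) yields $T_{K_{A,B}}(z)=(A-B)\phi_{A,B}(z)$. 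Then I would rewrite the numerator as $\omega(z)(2+A\omega(z))+(z\omega'(z)-\omega(z))$, bound $|z\omega'(z)-\omega(z)|\le(|z|^2-|\omega(z)|^2)/(1-|z|^2)$ by applying the Schwarz–Pick inequality to the self-map $z\mapsto\omega(z)/z$ of $\mathbb D$, and obtain
$$(1-|z|^2)|T_f(z)|\le (A-B)\,\frac{(1-|z|^2)\,|\omega(z)|\,|2+A\omega(z)|+\big(|z|^2-|\omega(z)|^2\big)}{|z|\,|1+A\omega(z)|\,|1+B\omega(z)|}.$$
The point is that the right-hand side equals $(A-B)(1-|\omega(z)|^2)|\phi_{A,B}(\omega(z))|$ times precisely the quantity $M(z)$ of Lemma \ref{p-lemma0001} (note $A>-1$ since $B<A$), so that Lemma \ref{p-lemma0001} forces $(1-|z|^2)|T_f(z)|\le (A-B)(1-|\omega(z)|^2)|\phi_{A,B}(\omega(z))|\le (A-B)\sup_{\zeta\in\mathbb D}(1-|\zeta|^2)|\phi_{A,B}(\zeta)|$. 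Because $T_{K_{A,B}}=(A-B)\phi_{A,B}$, this last supremum is exactly $\|T_{K_{A,B}}\|$, so $\|T_f\|\le\|T_{K_{A,B}}\|$, with equality for $f=K_{A,B}$. This part should be essentially forced once the formula for $T_f$ is written down.

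\emph{Evaluation of $\|T_{K_{A,B}}\|$.} It remains to compute $\|T_{K_{A,B}}\|=(A-B)\sup_{0\le\rho<1}(1-\rho^2)\max_{|\zeta|=\rho}|\phi_{A,B}(\zeta)|$. Lemma \ref{p-lemma0005} replaces $\max_{|\zeta|=\rho}|\phi_{A,B}(\zeta)|$ by $\phi_{A,B}(-\rho)$ or $\phi_{A,B}(\rho)$ on the indicated $\rho$-ranges, so the task becomes maximizing $\gamma_1(\rho)=(1-\rho^2)\phi_{A,B}(-\rho)$ and $\gamma_2(\rho)=(1-\rho^2)\phi_{A,B}(\rho)$ over (sub)intervals of $[0,1)$. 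A direct computation gives $\gamma_1'(\rho)=h_1(\rho)/[(1-A\rho)(1-B\rho)]^2$, and via the identities $\gamma_2(\rho)=\gamma_1(-\rho)$, $h_2(\rho)=h_1(-\rho)$ one gets $\gamma_2'(\rho)=-h_2(\rho)/[(1+A\rho)(1+B\rho)]^2$. I would then record the boundary values $h_1(0)=h_2(0)=A+2B$, $h_1(1)=2(B-1)(A-1)(A-2)\le 0$, $h_2(1)=2(B+1)(A+1)(A+2)\ge 0$, show that each of $h_1,h_2$ changes sign at most once on $(0,1)$, and deduce that in the relevant sign configuration $h_1$ (resp. $h_2$) has a unique zero $\alpha_1$ (resp. $\alpha_2$) in $(0,1)$, which is a maximum point of $\gamma_1$ (resp. $\gamma_2$). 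Assembling the cases then reduces to the degenerate situations, handled separately: $A=1$ (then $h_1(1)=0$, and for $B\ge\tfrac13$ one checks $h_1>0$ on $(0,1)$, so $\gamma_1$ increases with $\lim_{\rho\to1^-}\gamma_1(\rho)=2/(1-B)$, giving $\|T_{K_{A,B}}\|=2$, while for $B<\tfrac13$ the interior maximum $\gamma_1(\alpha_1)$ dominates); $B=-1$ (then $\gamma_1,\gamma_2$ simplify and $\sup\gamma_2=\lim_{\rho\to1^-}\gamma_2(\rho)=2(2+A)/(1+A)$, giving $2(2+A)$); $A+2B=0$ (then $h_1(0)=0$, $\alpha_1=0$, and $\gamma_1(\rho)\le\gamma_1(0)=2$, giving $2(A-B)$); and, when $AB<0$, the splitting of $[0,1)$ at $\rho=\beta$ between the two regimes of Lemma \ref{p-lemma0005}, where $\gamma_1$ is decreasing on $[\beta,1)$ and $\gamma_2(\beta)=\gamma_1(\beta)$, which yields $(A-B)\max\{\gamma_2(\alpha_2),\gamma_1(\beta)\}$ if $\beta<1$ and $(A-B)\gamma_2(\alpha_2)$ if $\beta\ge1$.

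The hard part will be the unimodality claims in the second half, i.e. that the quartics $h_1,h_2$ each change sign exactly once in $(0,1)$: the endpoint signs are immediate from the factorizations above, but excluding extra sign changes requires a genuine analysis of the quartics — for instance showing $h_1'<0$ (resp. $h_2'>0$) at every zero in $(0,1)$, via an identity such as $\rho\,h_1'(\rho)-4h_1(\rho)=2A(A+B)\rho^3-2[5A+(2+A^2)B]\rho^2+12(1+AB)\rho-4(A+2B)$ together with the constraint $h_1(\rho)=0$, $\rho\in(0,1)$; or else locating the zeros of $h_1'$ and $h_1''$. The remaining comparisons between competing candidate maxima — most notably $\gamma_1(\beta)$ versus $\gamma_2(\alpha_2)$ in the subcase $AB<0$, $A+2B<0$, $\beta<1$, and confirming $\gamma_1\le 2$ on $[0,1)$ when $A+2B=0$ — are elementary but must also be carried out.
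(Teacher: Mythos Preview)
Your outline is correct and follows essentially the same route as the paper: the universal bound via the Schwarz--Pick estimate on $\omega(z)/z$ combined with Lemma~\ref{p-lemma0001} is exactly what the paper does, and the evaluation of $\|T_{K_{A,B}}\|$ via Lemma~\ref{p-lemma0005} followed by locating the critical points of $\gamma_1,\gamma_2$ through $h_1,h_2$ is also the same reduction. Your observation that $\gamma_2(\rho)=\gamma_1(-\rho)$ and $h_2(\rho)=h_1(-\rho)$ is a tidy shortcut the paper does not exploit; it halves the bookkeeping but does not change the logic.

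The one point where the paper's execution differs from your plan is precisely the part you flag as hard: the paper does \emph{not} find a uniform argument that $h_1$ (or $h_2$) changes sign at most once on $(0,1)$. Instead it splits into several parameter subcases and, in each, tracks the signs of $h_1',h_1'',h_1'''$ (or uses a discriminant argument for $h_2''$, or factors $h_1$ explicitly when $A=1$ or $A+2B=0$, or in Subcase~5b writes $h_1=\tfrac12(A+2B)Q-\tfrac{x}{2}R$ with $Q,R>0$) to pin down the number of zeros. Your proposed identity $\rho\,h_1'(\rho)-4h_1(\rho)=\text{cubic}$ is a reasonable alternative line of attack, but be aware that the paper did not manage to avoid case splits here, so you should expect the same. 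All of your degenerate-case computations ($A=1$, $B=-1$, $A+2B=0$, and the identity $\gamma_1(\beta)=\gamma_2(\beta)$ at the switching radius) check out and match the paper's conclusions.
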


\begin{proof}
Let $f\in \mathcal{S}\mbox{*}(A,B)$. Then from \eqref{p-00005}, we have
\begin{equation*}\label{p-000001}
\frac{zf'(z)}{f(z)}\prec \frac{1+Az}{1+Bz}.
\end{equation*}
Thus, there exists an analytic function $\omega:\mathbb{D}\rightarrow\mathbb{D}$ with $\omega(0)=0$ such that $$\frac{zf'(z)}{f(z)}= \frac{1+A\omega(z)}{1+B\omega(z)}.$$
Taking logarithmic derivatives on both sides and then a simple computation gives
\begin{equation*}\label{p-000005}
T_f(z)=\frac{f''(z)}{f'(z)}=(A-B)\frac{\omega'-\frac{\omega}{z}+\frac{\omega}{z}(2+A\omega)}{(1+A\omega)(1+B\omega)},
\end{equation*}
and so
\begin{equation}\label{p-000010}
\displaystyle (1-|z|^2)|T_f(z)|= (A-B)\frac{(1-|z|^2)|\omega
'-\frac{\omega}{z}+\frac{\omega}{z}(2+A\omega)|}{|1+A\omega||1+B\omega)|}.
\end{equation}
Setting $\omega=id_{\mathbb{D}}$ (the identity function in $\mathbb{D}$), we also have
\begin{equation*}\label{p-000015}
T_{K_{A,B}}(z)=\frac{K''_{A,B}(z)}{K'_{A,B}(z)}=(A-B)\frac{2+Az}{(1+Az)(1+Bz)},
\end{equation*}
and so
\begin{equation}\label{p-000020}
\displaystyle (1-|z|^2)|T_{K_{A,B}}(z)|=(A-B)\frac{(1-|z|^2)|2+Az|}{|1+Az||1+Bz|}.
\end{equation}
By Schwarz-Pick lemma for the function $\frac{\omega(z)}{z}$, we have
\begin{equation*}\label{p-000025}
\displaystyle (1-|z|^2)\left|\omega'-\frac{\omega}{z}\right|\leq \frac{|z|^2-|\omega|^2}{|z|}.
\end{equation*}
From \eqref{p-000010}, we have
\begin{align*}
(1-|z|^2)|T_f(z)|
&\leq (A-B)\left[\frac{(1-|z|^2)|\omega'-\frac{\omega}{z}|}{|1+A\omega||1+B\omega|}+\frac{|\omega|}{|z|}\frac{(1-|z|^2)|2+A\omega|}{|1+A\omega||1+B\omega|}\right]\\
&\leq (A-B)\frac{(1-|\omega|^2)|2+A\omega|}{|1+A\omega||1+B\omega|}~\frac{(|z|^2-|\omega|^2)+|\omega|(1-|z|^2)|2+A\omega|}{|z|(1-|\omega|^2)|2+A\omega|}\\
&\leq (A-B)\frac{(1-|\omega|^2)|2+A\omega|}{|1+A\omega||1+B\omega|}~M(z),
\end{align*}
where $M(z)$ is defined as in Lemma \ref{p-lemma0001}.
By Lemma \ref{p-lemma0001}, we have
\begin{equation*}\label{p-000030}
(1-|z|^2)|T_f(z)|\leq (1-|\omega|^2)|T_{K_{A,B}}(\omega)|\leq ||T_{K_{A,B}}||,
\end{equation*}
which shows that $||T_f||\leq ||T_{K_{A,B}}||$.\\

Now, we will concentrate on estimating the pre-Schwarzian norm of the function $K_{A,B}(z)$. It follows from \eqref{p-000020} that
\begin{equation}\label{p-000035}
||T_{K_{A,B}}||=\sup\limits_{z\in\mathbb{D}}(1-|z|^2)|T_{K_{A,B}}(z)| =(A-B)\sup\limits_{z\in \mathbb{D}}(1-|z|^2)|\phi_{A,B}(z)|,
\end{equation}
where $\phi_{A,B}(z)$ is defined as in Lemma \ref{p-lemma0005}.\\

We note that if $AB\ge 0$, then either $0\le B<A\le 1$ or $-1\le B<A\le 0$. On the other hand if $AB<0$, then $-1\le B<0<A\le 1$. Now we consider the following cases.\\

\noindent\textbf{Case-1: } Let $AB\ge 0$ with $0\le B<A\le 1$. Then by Lemma \ref{p-lemma0005}, we obtain
\begin{equation}\label{p-000040}
\max_{|z|=r}\left|\phi_{A,B}(z)\right|\leq \phi_{A,B}(-r),
\end{equation}
which is sharp and the equality occurs when $z=-r$, $0<r<1.$ Using \eqref{p-000040} in \eqref{p-000035}, yields
\begin{align}\label{p-000040aa}
||T_{K_{A,B}}||&=\sup\limits_{z\in \mathbb{D}}(1-|z|^2)|T_{K_{A,B}}(z)| =(A-B)\sup\limits_{z\in \mathbb{D}}(1-|z|^2)|\phi_{A,B}(z)|\\
&=(A-B)\sup\limits_{0<x<1}\gamma_1(x),\nonumber
\end{align}
where $\gamma_1$ is given by \eqref{p-0040a}. A simple computation gives
\begin{equation}\label{p-000040a}
\gamma'_1(x)=\frac{h_1(x)}{(1-Ax)^2(1-Bx)^2},
\end{equation}
where $h_1$ is given by \eqref{p-0040b}. Clearly, $h_1(0)=A+2B>0$ and $h_1(1)=2(A-1)(1-B)(2-A)\le0$. Moreover,
\begin{equation}\label{p-000042a}
\begin{aligned}
\begin{cases}
h'_1(x)&=-4(1+AB)+2[5A+(2+A^2)B]x-6A(A+B)x^2+4A^2Bx^3,\\
h''_1(x)&=2[5A+(2+A^2)B]-12A(A+B)x+12A^2Bx^2,\\
h'''_1(x)&=-12A^2(1-Bx)-12AB(1-Ax)<0.
\end{cases}
\end{aligned}
\end{equation}
Thus, $h''_1(x)$ is a strictly decreasing function on $(0,1)$. Further,
\begin{equation}\label{p-000042b}
\begin{aligned}
\begin{cases}
& h'_1(0)=-4(1+AB)<0,\\
& h'_1(1)=2(1-A)(1-B)(3A-2),\\
&h''_1(0)=2[5A+(2+A^2)B]>0,\\ &h''_1(1)=10A+4B+14A^2B-12A^2-12AB.
\end{cases}
\end{aligned}
\end{equation}
Now, we consider the following subcases.\\

\noindent \textbf{Subcase-1a: } Let $A\le\frac{2}{3}$. Then it is easy to see from \eqref{p-000042b} that
$$h''_1(1)=(2-3A)\left[4A(1-B)+\frac{2}{3}B\left(\frac{4}{3}-A\right)\right]+2A+\frac{20}{9}B>0.$$
Therefore, $h'_1$ is a strictly increasing function on $(0,1)$ with $h'_1(1)\le 0$. Thus, $h_1$ is a strictly decreasing function on $(0,1)$. Consequently, the function $h_1(x)$ has exactly one zero in $(0,1)$.\\

\noindent \textbf{Subcase-1b: } Let $\frac{2}{3}<A<1$. From \eqref{p-000042b}, $h_1'(0)<0$ and $h_1'(1)>0$ and so $h'_1(x)$ has atleast one zero in $(0,1)$. Since $h'_1$ is a continuous polynomial of degree $3$, the number of zeros of $h'_1$ in $(0,1)$ is either $1$ or $3$. If $h_1'(x)$ has $3$ zeros in $(0,1)$, then by Rolle's theorem $h_1''(x)$ must have $2$ zeros in $(0,1)$, which is not possible as $h''_1$ is a strictly decreasing function on $(0,1)$. Thus, $h'_1$ has unique zero, say $\alpha_0$, in $(0,1)$. Therefore, $h_1$ is a decreasing function on $(0,\alpha_0)$ and increasing function on $(\alpha_0,1)$. Since $h_1(0)>0$ and $h_1(1)<0$, the function $h_1$ has exactly one zero in $(0,1)$.\\

In the above two subcases, the function $h_1(x)$ has unique zero, say $\alpha_1$, in $(0,1)$. Since $\gamma'_1(0)=h_1(0)>0$ and $\gamma'_1(1)<0$, the function $\gamma_1(x)$ is increasing in a neighborhood of $0$ and decreasing in a neighborhood of $1$. Therefore, $\gamma_1$ can not attain its maximum at $x=0,1$ and so $\gamma_1$ has maximum at $\alpha_1$. From \eqref{p-000040aa}, we get the desired result in these subcases.\\

\noindent\textbf{Subcase-1c: } Let $A=1$ and $B<\frac{1}{3}$. Then the function $h_1(x)$ given by \eqref{p-0040b} can be written as
\begin{equation}\label{p-000045}
h_1(x)=(1-x)^2 q(x), \quad\text{where}~ q(x)=1+2B-2x+Bx^2.
\end{equation}
Clearly, $q'(x)=-2(1-Bx)<0$ for all $x\in(0,1)$ and so $q$ is a strictly decreasing function on $(0,1)$. Moreover, $q(0)=1+2B>0$, $q(1)=3B-1< 0$. Thus, the function $q$ has unique zero in $(0,1)$ and so the function $h_1$ has unique zero in $(0,1)$, say $\alpha_1$. Since $\gamma'_1(0)>0$ and $\gamma'_1(1)=q(1)/(1-B)^2<0$, the function $\gamma_1(x)$ is increasing in a neighborhood of $0$ and decreasing in a neighborhood of $1$. Thus, $\gamma_1$ can not attain its maximum at $x=0,1$ and so $\gamma_1$ has maximum at $\alpha_1$. From \eqref{p-000040aa}, we get the desired result in this subcase.\\

\noindent\textbf{Subcase-1d: } Let $A=1$ and $B\ge\frac{1}{3}$. Then from \eqref{p-000045}, we have $q(0)=1+2B>0$, $q(1)=-1+3B\geq 0$. Since $q$ is a strictly decreasing function on $(0,1)$, it follows that $h_1(x)>0$ for all $x\in(0,1)$. Therefore, $\gamma_1$ is a strictly increasing function on $(0,1)$. Thus, $\displaystyle \sup_{0<x<1}\gamma_1(x)=\gamma_1(1)=2/(1-B)$ and so from \eqref{p-000040aa}, we get $||T_{K_{1,B}}||=2.$\\

\noindent\textbf{Case-2: } Let $AB\ge 0$ with $-1\le B<A\le 0$. By Lemma \ref{p-lemma0005}, we have
\begin{equation}\label{p-000050}
\max_{|z|=r}\left|\phi_{A,B}(z)\right|\leq \phi_{A,B}(r),
\end{equation}
which is sharp and the equality occurs at $z=r$, $0<r<1.$ Using \eqref{p-000050} in \eqref{p-000035}, yields
\begin{equation}\label{p-000053}
\begin{aligned}
||T_{K_{A,B}}||&=\sup\limits_{z\in \mathbb{D}}(1-|z|^2)|T_{K_{A,B}}(z)| =(A-B)\sup\limits_{z\in \mathbb{D}}(1-|z|^2)|\phi_{A,B}(z)|\\
&=(A-B)\sup\limits_{0<x<1}\gamma_2(x),
\end{aligned}
\end{equation}
where $\gamma_2$ is given by \eqref{p-0040a}.\\

\noindent \textbf{Subcase-2a: } Let $B\neq -1$. A simple computation gives
$$\gamma'_2(x)=-\frac{h_2(x)}{(1+Ax)^2(1+Bx)^2},$$
where $h_2(x)$ is given by \eqref{p-0040c}. Clearly, $h_2(0)=A+2B<0$, $h_2(1)=2(1+A)(1+B)(2+A)>0$. Moreover,
\begin{align*}
h'_2(x)&=4(1+AB)+2[5A+(2+A^2)B]x+6A(A+B)x^2+4A^2Bx^3,\\
h''_2(x)&=2[5A+(2+A^2)B]+12A(A+B)x+12A^2Bx^2.
\end{align*}
The discriminant of $h''_2(x)=0$ is $48A^2[3A(A-B)-AB(1+2AB)-B^2]<0$. Thus, $h''_2(x)=0$ has no real roots. Since $h_2'(x)$ is a polynomial of degree $3$, so $h'_2(x)=0$ has only one real root. Thus, $h_2(x)=0$ have two real roots. Clearly, $h_2(x)=0$ has one real root on each of the intervals $(0,1)$ and $(1,\infty)$. Consequently, $h_2$ has exactly one zero, say $\alpha_2$, in $(0,1)$. Since $\gamma'_2(0)>0$ and $\gamma'_2(1)<0$, the function $\gamma_2(x)$ is increasing in a neighborhood of $0$ and decreasing in a neighborhood of $1$ and so $\gamma_2$ has maximum at $\alpha_2$. From \eqref{p-000053}, we get the desired result in this subcase.\\

\noindent \textbf{Subcase-2b: } Let $B=-1$. Then $\gamma_2$ can be written as $\gamma_2(x)=(1+x)(2+Ax)/(1+Ax)$. A simple calculation shows that $\gamma_2$ is increasing function on $(0,1)$ and therefore, from \eqref{p-000053}, we get $||T_{K_{A,-1}}||=2(2+A)$.\\

\noindent\textbf{Case-3: } Let $AB<0$ with $A+2B> 0$. From the Lemma \ref{p-lemma0005}, we conclude that
\begin{equation}\label{p-000060}
\max_{|z|=r}\left|\phi_{A,B}(z)\right|\leq \phi_{A,B}(-r),
\end{equation}
which is sharp and the equality occurs at $z=-r$, $0<r<1.$ Using \eqref{p-000060} in \eqref{p-000035}, we have
\begin{align}\label{p-000061a}
||T_{K_{A,B}}||&=\sup\limits_{z\in \mathbb{D}}(1-|z|^2)|T_{K_{A,B}}(z)| =(A-B)\sup\limits_{z\in \mathbb{D}}(1-|z|^2)|\phi_{A,B}(z)|\\
&=(A-B)\sup\limits_{0<x<1}\gamma_1(x),\nonumber
\end{align}
where $\gamma_1(x)$ is given by \eqref{p-0040a}. Then, $\gamma_1'(x)$ and $ h_1(x)$ are given by \eqref{p-000040a} and \eqref{p-0040b} respectively and $h'_1(x),~h''_1(x),~h'''_1(x)$ are given by \eqref{p-000042a}. Clearly,
\begin{equation}\label{p-000063}
\begin{cases}
& h_1(0)=A+2B>0,\\ &h_1(1)=2(A-1)(1-B)(2-A),\\
& h'_1(0)=-4(1+AB)<0,\\ &h'_1(1)=2(1-A)(1-B)(3A-2),\\
&h''_1(0)=2[A(4+AB)+(A+2B)]>0,\\ &h''_1(1)=10A+4B+14A^2B-12A^2-12AB.
\end{cases}
\end{equation}
From \eqref{p-000042a}, it is easy to show that $h'''_1(x)<0$ for all $x\in(0,1)$. Therefore, $h''_1$ is a strictly decreasing function on $(0,1)$. Now, we consider the following subcases. \\

\noindent \textbf{Subcase-3a: } Let $A\le \frac{2}{3}$. From \eqref{p-000063} it is easy to see that
$$h''_1(1)=(2-3A)\left(-\frac{14AB}{3}+4A\right)+2(A+2B)-\frac{8}{3}AB>0.$$
Therefore, $h'_1$ is a strictly increasing function on $(0,1)$ and $h'_1(1)\le 0$, i.e., $h'_1(x)< 0$ for all $x\in (0,1)$. Thus, $h_1$ is a strictly decreasing function on $(0,1)$. This shows that $h_1$ has exactly one zero in $(0,1)$.\\

\noindent \textbf{Subcase-3b: } Let $\frac{2}{3}<A<1$. Then from \eqref{p-000063}, the function $h'_1$ has atleast one zero in $(0,1)$. Indeed, $h'_1$ has three zeros, one lies in each of the intervals $(-\infty,0)$, $(0,1)$ and $(1,\infty).$ Therefore, $h'_1$ has unique zero, say $\alpha_0$, in $(0,1)$. Thus, $h_1$ is decreasing in $(0,\alpha_0)$ and increasing in $(\alpha_0,1)$. Since $h_1(0)>0$ and $h_1(1)<0$, it follows that $h_1$ has unique zero in $(0,1)$.\\

\noindent\textbf{Subcase-3c: } Let $A=1$ and $-\frac{1}{2}<B\le 0$. From \eqref{p-000045}, we have $q(0)=1+2B>0$, $q(1)=-1+3B< 0$. Since $q$ is a strictly decreasing function on $(0,1)$, it follows that $q(x)$ has unique zero in $(0,1)$ and so $h_1(x)$ has also unique zero in $(0,1)$.\\

In the above three subcases, the function $h_1(x)$ has an unique zero, say $\alpha_1$, in $(0,1)$. Since $\gamma'_1(0)=h_1(0)>0$ and $\gamma'_1(1)<0$, the function $\gamma_1(x)$ is increasing in a neighborhood of $0$ and decreasing in a neighborhood of $1$. Thus, $\gamma_1$ can not attain its maximum at $x=0,1$ and so $\gamma_1$ has maximum at $\alpha_1$. From \eqref{p-000061a}, we get the desired result in these subcases.\\

\noindent\textbf{Case-4: } Let $AB<0$ with $A+2B=0$. Note that $AB<0$ and $A+2B=0$ implies that $-\frac{1}{2}\leq B<0$. Then as in \textbf{Case-3}
\begin{align*}
||T_{K_{A,B}}||=(A-B)\sup\limits_{0<x<1}\gamma_1(x),
\end{align*}
where $\gamma_1(x)$ is given by \eqref{p-0040a}. From \eqref{p-0040b}, $h_1$ can be written as
$h_1(x)=4xp(x)$,
where 
$$p(x)=B^3x^3-B^2x^2+B^3x-2Bx+2B^2-1.$$
\noindent Clearly, $p(0)=2B^2-1<0$ and $p(1)=(B^2-1)(1+2B)\le 0$. Moreover,
\begin{align*}
p'(x)&=3B^3x^2-2B^2x+B^3-2B,\\
p''(x)&=6B^3x-2B^2<0,
\end{align*}
which shows that $p'$ is a strictly decreasing function on $(0,1)$. Thus, $p'(0)=B(B^2-2)>0$ and $p'(1)=B(4B^2-2B-2)\ge 0$. It follows that $p'(x)> 0$ for all $x\in (0,1)$. Therefore, $p$ is a  strictly increasing function on $(0,1)$ with $p(1)\le 0$. This lead us to conclude that $h_1(x)< 0$ for all $x\in (0,1)$ and so $\gamma'_1(x)< 0$ for all $x$ in $(0,1)$. That is, $\gamma_1$ is a strictly decreasing function on $(0,1)$. Therefore, $\gamma_1$ attains maximum at $0$ and consequently, $||T_{K_{A,B}}||=2(A-B)$.\\

\noindent\textbf{Case-5: } Let $AB<0$ with $A+2B<0$ and $\beta=\sqrt{(A+2B)/A^2B}<1$. From the Lemma \ref{p-lemma0005}, we obtain
\begin{equation}\label{p-000065}
\max_{|z|=r}\left|\phi_{A,B}(z)\right|\leq \phi_{A,B}(r), ~\text{if}~0<r\leq \beta,
\end{equation}
and
\begin{equation}\label{p-000070}
\max_{|z|=r}\left|\phi_{A,B}(z)\right|\leq \phi_{A,B}(-r),~\text{if}~\beta\leq r<1.
\end{equation}
Both the inequalities are sharp and equality occurs in \eqref{p-000065} at $z=r$ and equality occurs in \eqref{p-000070} at $z=-r$, $0<r<1$. Therefore,
\begin{align}\label{p-000073}
||T_{K_{A,B}}||&=\sup\limits_{z\in \mathbb{D}}(1-|z|^2)|T_{K_{A,B}}(z)| =(A-B)\sup\limits_{z\in \mathbb{D}}(1-|z|^2)|\phi_{A,B}(z)|\\
&=(A-B)\max\left\{\sup\limits_{0<x\leq \beta}\gamma_2(x),\sup\limits_{\beta\leq x<1}\gamma_1(x)\right\},\nonumber
\end{align}
where $\gamma_1$ and $\gamma_2$ are given by \eqref{p-0040a}. We now prove that $\gamma_2$ has only one zero in $(0,\beta)$ and $\gamma_1$ has no zero in $(\beta,1)$ by considering two different subcases:\\

\noindent \textbf{Subcase-5a: } Let $0<x<\beta<1$. In this subcase
$$\gamma'_2(x)=-\frac{h_2(x)}{(1+Ax)^2(1+Bx)^2},$$
where $h_2(x)$ is given by \eqref{p-0040c}. Since $\beta<1$, it is easy to show that $A+B>B(A^2-1)\ge 0$.
Therefore, $h_2(0)=A+2B<0$ and $h_2(\beta)={\beta}^2[2A(1+B)+4(A+B)]+\beta[4(1+AB)+2A(A+B){\beta}^2]>0$. This shows that $h_2$ has atleast one zero in $(0,\beta)$. Since $-1\le B<0<A\le 1$, it follows that
\begin{align}
\label{p-000075}
h'_2(x)&=(4+10Ax+6A^2x^2)+(4AB+4Bx+2A^2Bx+6ABx^2+4A^2Bx^3)\\&
\ge (4+10Ax+6A^2x^2)-(4A+4x+2A^2x+6Ax^2+4A^2x^3)\nonumber \\
&=4(1-x)(1-A)+2Ax(3-A)(1-x)+4A^2x^2(1-x)\nonumber\\
& >0\nonumber
\end{align}
for all $x\in (0,1)$. Thus, $h_2$ is a strictly increasing function on $(0,1)$. Consequently, $h_2$ has exactly one zero, say $\alpha_2$, in $(0,\beta)$. Since $\gamma'_2(0)=-h_2(0)>0$ and $\gamma_2'(\beta)<0$,  the function $\gamma_2(x)$ is increasing in a neighborhood of $0$ and decreasing in a neighborhood of $\beta$. Thus, $\gamma_2$ has maximum at $\alpha_2$.\\

\noindent \textbf{Subcase-5b: } Let $\beta\le x<1$. In this subcase
$$\gamma'_1(x)=\frac{h_1(x)}{(1-Ax)^2(1-Bx)^2},$$
where $h_1(x)$ is given by \eqref{p-0040b}. By division algorithm, $h_1(x)$ can be written as
\begin{equation}\label{p-000080}
h_1(x)=\frac{1}{2}(A+2B)Q(x)-\frac{x}{2}R(x),
\end{equation}
where
\begin{align*}
Q(x)&=A^2x^4-2Ax^3+2x^2+A^2x^2-4Ax+2,\\ R(x)&=A^3x^3+2A^2x^2-8Ax+A^3x-4A^2+8.
\end{align*}
We claim that $Q(x)$ and $R(x)$ are positive in $(0,1)$. First we prove that $R(x)$ is positive in $(0,1)$. Clearly, $R(0)=8-4A^2>0$, $R(1)=(1-A)(8-2A^2)\ge 0$ and
\begin{align*}
R'(x)&=3A^3x^2+4A^2x-8A+A^3\\&=3A(A^2x^2-1)+4A(Ax-1)+A(A^2-1)<0.
\end{align*}
Thus, $R(x)$ is a strictly decreasing function on $(0,1)$ and so $R(x)$ is positive in $(0,1)$.\\

Now, we prove that $Q(x)$ is positive on $(0,1)$. We take $Q(x)=Q(x,A)=A^2x^4-2Ax^3+2x^2+A^2x^2-4Ax+2$. It is easy to prove that $Q(x,A)$ is a decreasing function of $A$ and $Q(x,1)=(-1+x)^2(2+x^2)>0$ for all $x\in (0,1)$. Thus, $Q$ is positive for all $A$ and $x\in (0,1)$. \\

Consequently, from \eqref{p-000080}, we conclude that $h_1(x)<0$ for all $x$ in $(0,1)$. Therefore, $h_1$ has no zero in $(0,1)$. Therefore, $\gamma_1$ has a maximum at the endpoint. Moreover, $\gamma_1(\beta)>\gamma_1(1)=0$.\\

\noindent Combining the \textbf{Subcase-5a} and \textbf{Subcase-5b}, we get the desired result from \eqref{p-000073}.\\

\noindent \textbf{Case-6: } Let $AB<0$ with $A+2B<0$ and $\beta=\sqrt{(A+2B)/A^2B}\ge 1$. From the Lemma \ref{p-lemma0005}, we obtain
\begin{equation}\label{p-000085}
\max_{|z|=r}\left|\phi_{A,B}(z)\right|\leq \phi_{A,B}(r),
\end{equation}
which is sharp and the equality occurs at $z=r$, $0<r<1$. Using \eqref{p-000085} in \eqref{p-000035}, yields
\begin{align}\label{p-000090}
||T_{K_{A,B}}||&=\sup\limits_{z\in \mathbb{D}}(1-|z|^2)|T_{K_{A,B}}(z)| =(A-B)\sup\limits_{z\in \mathbb{D}}(1-|z|^2)|\phi_{A,B}(z)|\\
&=(A-B)\sup\limits_{0<x<1}\gamma_2(x),\nonumber
\end{align}

where $\gamma_2$ is given by \eqref{p-0040a}. \\

\noindent\textbf{Subcase-6a: } Let  $B=-1$. Then $\gamma_2$ can be written as $\gamma_2(x)=(1+x)(2+Ax)/(1+Ax)$. A simple calculation shows that $\gamma_2$ is a increasing function on $(0,1)$ and therefore, $||T_{K_{A,B}}||=2(2+A)$.\\

\noindent\textbf{Subcase-6b: } Let $B\ne -1$. Therefore,
$$\gamma'_2(x)=-\frac{h_2(x)}{(1+Ax)^2(1+Bx)^2},$$
where $h_2(x)$ is given by \eqref{p-0040c}. In particular, $h_2(0)=A+2B<0$ and $h_2(1)=2(1+A)(1+B)(2+A)>0$. From \eqref{p-000075}, it is easy to show that $h'_2(x)>0$ for all $x$ in $(0,1)$. Therefore, $h_2$ has unique zero, say $\alpha_2$, in $(0,1)$. Since $\gamma'_2(0)=-h_2(0)>0$ and $\gamma'_2(1)<0$, the function $\gamma_2(x)$ is increasing in a neighborhood of $0$ and decreasing in a neighborhood of $1$. Thus, $\gamma_2$ has maximum at $\alpha_2$. Thus, the desired result follows from \eqref{p-000090}.
\end{proof}

For particular values of $A$ and $B$ with $-1\le B<A\le 1$, we get the following sharp estimate for the pre-Schwarzian norm for several subclasses of $\mathcal{S}$.\\

\begin{cor}\cite[Theorem 2]{Aghalary-Orouji-2014}
Let $f\in \mathcal{S}\mbox{*}(\alpha)$, $0\le\alpha<1$ be of the form \eqref{p-00001}. Then the sharp inequality  $||T_f||\le 6-4\alpha$, holds and equality occurs for the function $k_{\alpha}(z)=z/(1-z)^{2(1-\alpha)}$.
\end{cor}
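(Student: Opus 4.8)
The plan is to obtain this corollary as an immediate specialization of Theorem \ref{theorem-001}. First I would invoke the identity $\mathcal{S}\mbox{*}(\alpha)=\mathcal{S}\mbox{*}(1-2\alpha,-1)$ recorded in the introduction, so that Theorem \ref{theorem-001} applies with $A=1-2\alpha$ (which lies in $(-1,1]$ since $0\le\alpha<1$) and $B=-1$. This already yields $||T_f||\le||T_{K_{A,B}}||$ and simultaneously identifies the extremal function: since $B\ne 0$, we have $K_{A,B}(z)=z(1+Bz)^{A/B-1}=z(1-z)^{2\alpha-2}=z/(1-z)^{2(1-\alpha)}=k_\alpha(z)$.

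It then remains only to read off the value of $||T_{K_{1-2\alpha,-1}}||$ from the two-part formula in Theorem \ref{theorem-001}, and I would organize this by the sign of $AB=-(1-2\alpha)$. If $1/2\le\alpha<1$, then $-1=B<A=1-2\alpha\le 0$, i.e.\ $AB\ge 0$, so we are in the last line of part (i) (the case $B=-1$, $-1<A\le 0$), which gives $||T_{K_{A,B}}||=2(2+A)=2(3-2\alpha)=6-4\alpha$. If $0\le\alpha<1/2$, then $A=1-2\alpha>0$ and $B=-1$, so $AB<0$; moreover $A+2B=-(1+2\alpha)<0$, placing us in part (ii), and since $B=-1$ the relevant branch is the last one (the case $A>0$, $B=-1$), which again gives $||T_{K_{A,B}}||=2(2+A)=6-4\alpha$. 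As a consistency check, on this latter range $\beta^2=(A+2B)/(A^2B)=(1+2\alpha)/(1-2\alpha)^2\ge 1$, matching the hypothesis of that branch. In both regimes the bound is $6-4\alpha$.

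Combining, $||T_f||\le 6-4\alpha$ for every $f\in\mathcal{S}\mbox{*}(\alpha)$, and since $k_\alpha=K_{1-2\alpha,-1}\in\mathcal{S}\mbox{*}(\alpha)$ attains equality (the supremum defining $||T_{k_\alpha}||$ being realized on the real axis, exactly as in the proof of Theorem \ref{theorem-001}), the estimate is sharp. There is essentially no real obstacle here: the only care required is the bookkeeping to confirm that every $\alpha\in[0,1)$ falls into exactly one of the two branches above, with the overlap value $\alpha=1/2$ (where $A=0$) handled by part (i), and that the two closed-form expressions agree at that point.
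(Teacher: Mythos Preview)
Your proposal is correct and follows exactly the route the paper intends: the corollary is obtained by specializing Theorem \ref{theorem-001} to $A=1-2\alpha$, $B=-1$, using the identification $\mathcal{S}\mbox{*}(\alpha)=\mathcal{S}\mbox{*}(1-2\alpha,-1)$ and the explicit form of $K_{A,B}$, and then reading off $||T_{K_{A,B}}||=2(2+A)=6-4\alpha$ from the $B=-1$ branches of parts (i) and (ii). The paper itself does not spell out the case split, so your careful bookkeeping (including the overlap at $\alpha=1/2$ and the consistency check $\beta\ge 1$) is a welcome elaboration rather than a deviation.
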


\begin{cor}
Let $f\in \mathcal{S}\mbox{*}$ be of the form \eqref{p-00001}. Then the sharp inequality $||T_f||\le 6$, holds and equality occurs for the Koebe function $k(z)=z/(1-z)^2$.
\end{cor}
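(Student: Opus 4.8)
The plan is to obtain this as an immediate specialization of Theorem \ref{theorem-001}. First, recall from item (i) of the list in the Introduction that $\mathcal{S}\mbox{*}=\mathcal{S}\mbox{*}(1,-1)$; hence a function $f\in\mathcal{S}\mbox{*}$ is exactly a Janowski starlike function with $A=1$ and $B=-1$, and Theorem \ref{theorem-001} applies with this choice of parameters, giving $||T_f||\le||T_{K_{1,-1}}||$.

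Next I would pin down the correct branch of the piecewise formula for $||T_{K_{A,B}}||$. Since $AB=-1<0$, part (ii) of Theorem \ref{theorem-001} is the relevant one. Computing the auxiliary quantities for $A=1$, $B=-1$: we have $A+2B=-1<0$, and $\beta=\sqrt{(A+2B)/(A^2B)}=\sqrt{(-1)/(-1)}=1\ge 1$; moreover $A=1>0$ and $B=-1$. Thus the last line of the formula in part (ii) applies and yields
$$||T_{K_{1,-1}}||=2(2+A)=2(2+1)=6,$$
so that $||T_f||\le 6$.

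Finally I would identify the extremal function and record sharpness. From the definition \eqref{p-00010} with $A=1$ and $B=-1\ne 0$, we get $K_{1,-1}(z)=z(1+Bz)^{\frac{A}{B}-1}=z(1-z)^{-2}=z/(1-z)^2$, which is precisely the Koebe function $k$. Since $k=K_{1,-1}\in\mathcal{S}\mbox{*}(1,-1)=\mathcal{S}\mbox{*}$, the bound $6$ is attained (as a supremum) by $k$, so the inequality is sharp. If a direct check is wanted, logarithmic differentiation gives $T_k(z)=k''(z)/k'(z)=2(2+z)/(1-z^2)$, whence $(1-r^2)|T_k(r)|=2(2+r)\to 6$ as $r\to 1^-$, confirming $||T_k||=6$. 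There is essentially no obstacle here beyond correctly navigating the case distinctions of Theorem \ref{theorem-001} and recognizing $K_{1,-1}$ as the Koebe function.
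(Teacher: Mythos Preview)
Your proposal is correct and matches the paper's approach: the corollary is stated without a separate proof, being an immediate specialization of Theorem~\ref{theorem-001} at $A=1$, $B=-1$ (equivalently, the case $\alpha=0$ of the preceding corollary), with $K_{1,-1}$ recognized as the Koebe function. Your navigation of the case split in part~(ii) and the direct verification that $\|T_k\|=6$ are accurate.
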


\noindent Taking $A=1$ and $B=0$, we get the following result.

\begin{cor}
Let $f\in \mathcal{S}\mbox{*}(1,0)$ be of the form \eqref{p-00001}. Then the sharp inequality $||T_f||\le 9/4$, holds and equality occurs for the function $f(z)=ze^z$.
\end{cor}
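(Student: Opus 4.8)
The plan is to specialise Theorem~\ref{theorem-001} to the parameters $A=1$, $B=0$. First I would observe that, by \eqref{p-00010}, $K_{1,0}(z)=ze^{z}$, so the function appearing in the statement is exactly the Janowski extremal function $K_{A,B}$ for these parameters. Consequently, once the number $||T_{K_{1,0}}||$ is computed, the inequality $||T_f||\le||T_{K_{1,0}}||$ is immediate from Theorem~\ref{theorem-001}, and the equality assertion follows because that theorem's bound is attained by $K_{A,B}$ itself, which belongs to $\mathcal{S}\mbox{*}(A,B)$ by the remark following \eqref{p-00010}.

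Next I would identify the relevant branch of part~(i) of Theorem~\ref{theorem-001}. Since $AB=0\ge0$ with $0=B<A=1$, and in particular $A=1$ with $B=0<\tfrac13$, we are in the second line of the case list, which gives $||T_{K_{1,0}}||=(A-B)\gamma_1(\alpha_1)=\gamma_1(\alpha_1)$, where $\alpha_1$ is the unique zero of $h_1$ in $(0,1)$. It then remains only to evaluate these quantities. From \eqref{p-0040a} with $A=1$, $B=0$,
$$\gamma_1(x)=\frac{(1-x^2)(2-x)}{1-x}=(1+x)(2-x),$$
and from the factorisation \eqref{p-000045} of \eqref{p-0040b} at $A=1$ we get $h_1(x)=(1-x)^2(1-2x)$, whose only zero in $(0,1)$ is $\alpha_1=\tfrac12$. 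Therefore $||T_{K_{1,0}}||=\gamma_1(\tfrac12)=\tfrac32\cdot\tfrac32=\tfrac94$.

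I do not expect any genuine obstacle here; the argument is a direct application of Theorem~\ref{theorem-001} followed by an elementary computation. The one point worth a moment's care is the case distinction: one must confirm that $(A,B)=(1,0)$ really satisfies $B<\tfrac13$, so that the applicable value is $(A-B)\gamma_1(\alpha_1)$ rather than the value $2$ reserved for $A=1$, $B\ge\tfrac13$. As a consistency check one can compute directly that $T_{ze^{z}}(z)=(2+z)/(1+z)$, whence $(1-|z|^2)|T_{ze^{z}}(z)|$ attains its maximum $\tfrac94$ along the negative real axis at $z=-\tfrac12$, which both recovers the value and exhibits the extremal function.
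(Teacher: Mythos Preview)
Your proposal is correct and follows exactly the route the paper indicates: the corollary is obtained by specialising Theorem~\ref{theorem-001} to $A=1$, $B=0$, landing in the case $A=1$, $B<\tfrac13$, and your explicit computation of $\alpha_1=\tfrac12$ and $\gamma_1(\tfrac12)=\tfrac94$ is accurate. The paper itself only says ``Taking $A=1$ and $B=0$, we get the following result,'' so you have merely written out the details.
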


\noindent Taking $A=\alpha$ and $B=-\alpha$, $0<\alpha\le 1$, we get the following result.

\begin{cor}
Let $f\in\mathcal{S}\mbox{*}(\alpha,-\alpha)$, $0<\alpha\le 1$ be of the form \eqref{p-00001}. Then the sharp inequality 
$$||T_f||\le\begin{cases}
2\alpha(1-x_0^2)(2+\alpha x_0)/(1-\alpha^2x_0^2), & \text{ for }~ 0<\alpha<1,\\[2mm]
6, & \text{ for }~\alpha=1
\end{cases}$$
holds, where $x_0$ is an unique root in $(0,1)$ of the equation: 
$$\alpha^3x^4+(\alpha^3-3\alpha)x^2+(4\alpha^2-4)x+\alpha=0.$$ Moreover, the equality occurs for the function $f(z)=z/(1-\alpha z)^2$.
\end{cor}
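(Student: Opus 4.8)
The plan is to specialize Theorem~\ref{theorem-001} to the parameters $A=\alpha$, $B=-\alpha$ with $0<\alpha\le 1$. First I would record the extremal function: since $B=-\alpha\neq 0$, formula \eqref{p-00010} gives $K_{\alpha,-\alpha}(z)=z(1-\alpha z)^{\alpha/(-\alpha)-1}=z/(1-\alpha z)^{2}$, and by Theorem~\ref{theorem-001} the sharp bound is $||T_f||\le||T_{K_{\alpha,-\alpha}}||$ with equality for this function. It then remains only to evaluate $||T_{K_{\alpha,-\alpha}}||$ from the piecewise formula, so the entire argument is a case check followed by elementary algebra; I do not expect any genuine obstacle.

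Next I would determine which branch of part~(ii) of Theorem~\ref{theorem-001} applies. Here $AB=-\alpha^{2}<0$, $A+2B=\alpha-2\alpha=-\alpha<0$, and $\beta=\sqrt{(A+2B)/(A^{2}B)}=\sqrt{(-\alpha)/(-\alpha^{3})}=1/\alpha$. Thus $\beta=1/\alpha>1$ for $0<\alpha<1$ while $B=-\alpha\neq-1$, which places us in the branch $||T_{K_{A,B}}||=(A-B)\gamma_{2}(\alpha_{2})$, where $\alpha_{2}$ is the unique root of $h_{2}$ in $(0,1)$; and for $\alpha=1$ we have $B=-1$, $A=1>0$, so the last branch gives $||T_{K_{1,-1}}||=2(2+A)=6$, consistent with the Koebe function $z/(1-z)^{2}$.

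Finally, for $0<\alpha<1$ I would substitute $A=\alpha$, $B=-\alpha$ into \eqref{p-0040c}: the $x^{3}$-coefficient $2A(A+B)$ vanishes, $A^{2}B=-\alpha^{3}$, $5A+(2+A^{2})B=3\alpha-\alpha^{3}$, $4(1+AB)=4-4\alpha^{2}$, and $A+2B=-\alpha$, so $h_{2}(x)=-\alpha^{3}x^{4}+(3\alpha-\alpha^{3})x^{2}+(4-4\alpha^{2})x-\alpha$; multiplying by $-1$, the equation $h_{2}(x)=0$ becomes exactly $\alpha^{3}x^{4}+(\alpha^{3}-3\alpha)x^{2}+(4\alpha^{2}-4)x+\alpha=0$, which identifies $x_{0}=\alpha_{2}$. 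Likewise $A-B=2\alpha$ and, from \eqref{p-0040a}, $\gamma_{2}(x)=(1-x^{2})(2+\alpha x)/\bigl((1+\alpha x)(1-\alpha x)\bigr)=(1-x^{2})(2+\alpha x)/(1-\alpha^{2}x^{2})$, so $(A-B)\gamma_{2}(x_{0})=2\alpha(1-x_{0}^{2})(2+\alpha x_{0})/(1-\alpha^{2}x_{0}^{2})$, the asserted value; the existence and uniqueness of $x_{0}$ in $(0,1)$ is already supplied by Subcase-6b in the proof of Theorem~\ref{theorem-001}. Sharpness in both cases is inherited directly from Theorem~\ref{theorem-001}. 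If anything here requires care, it is merely checking the branch conditions and keeping track of the sign when normalizing $h_{2}$, not any substantive difficulty.
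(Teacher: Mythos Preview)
Your proposal is correct and follows exactly the approach the paper intends: the corollary is stated immediately after ``Taking $A=\alpha$ and $B=-\alpha$, $0<\alpha\le 1$, we get the following result'' with no separate proof, so specialization of Theorem~\ref{theorem-001} is precisely what is required. Your branch identification (Subcase-6b for $0<\alpha<1$, the $B=-1$ branch for $\alpha=1$), the algebraic reduction of $h_2$ to the stated quartic, and the simplification of $(A-B)\gamma_2(x_0)$ are all accurate.
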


\end{document}